\newtheorem{theorem}{Theorem}[section]
\newtheorem{proposition}[theorem]{Proposition}
\newtheorem{corollary}[theorem]{Corollary}
\theoremstyle{definition}
\newtheorem{definition}[theorem]{Definition}
\theoremstyle{remark}
\numberwithin{equation}{section}
\newcommand\style{\mathcal }          
\newcommand{\B}{\style{B}}
\newcommand{\M}{\style{M}}
\newcommand\A{{\style A}}
\renewcommand{\H}{\style{H}}
\newcommand{\K}{\style K}
\newcommand\osr{{\style R}}
\newcommand\ost{{\style T}}
\newcommand\omin{\otimes_{\rm min}}
\newcommand\omax{\otimes_{\rm max}}
\newcommand\jay{{\style J}}                                  
\newcommand\cstar{{\rm C}^*}                              
\newcommand\cstare{{\rm C}_{\rm min}^*}              
\begin{document}

\title[Essential Matrix Ranges 
and the Smith-Ward Problem]{Geometry of Essential Matrix Ranges and
the Smith-Ward Problem for Operator Systems}

\author{Douglas Farenick, Chi-Kwong Li, and Sushil Singla}
\address{Department of Mathematics and Statistics, University of Regina, Regina, Saskatchewan S4S 0A2, Canada}
\address{Department of Mathematics, College of William \& Mary, Williamsburg, Virginia 23185, United States of America}
\address{Department of Mathematics and Statistics, University of Regina, Regina, Saskatchewan S4S 0A2, Canada}

\email{douglas.farenick@uregina.ca}
\email{ckli@math.wm.edu}
\email{sushil@uregina.ca}

\subjclass[2020]{47A20, 47A13, 46L07}

\begin{abstract} A $d$-tuple of bounded linear selfadjoint operators acting
on an infinite-dimensional separable Hilbert space is said to have the Smith-Ward property if 
the identity map of the image of the operator system in the Calkin algebra has a completely positive lift. 
In this paper, we focus on noncommutative geometric properties of a finite-dimensional operator system 
with the goal of understanding how geometric information encoded by the essential matrix range of 
a spanning set of linear basis for the operator system implies the Smith-Ward property. Some geometric objects
of special interest in this paper include one form of noncommutative complex Euclidean ball and maximal noncommutative cubes and polydiscs,
as well as some extremal compact matrix convex sets, $K^{\rm min}$ or $K^{\rm max}$, determined by a given compact convex
subset $K$ of $\mathbb R^d$.
\end{abstract}

\maketitle

 \section{Introduction}
 
 A $d$-tuple $\mathfrak a$ of bounded linear selfadjoint operators acting
on a separable Hilbert space of infinite dimension is said to have the Smith-Ward property if 
the essential matrix range $W_{\rm e}(\mathfrak a)$ of 
$\mathfrak a$ agrees with the matrix range $W(\mathfrak a + \mathfrak j)$
of a perturbation of $\mathfrak a$ by a tuple $\mathfrak j$ of compact operators. 
The Smith-Ward problem \cite{smith--ward1980b,smith--ward1980}
is to determine which $d$-tuples of selfadjoint operators exhibit the Smith-Ward property. Every 
selfadjoint operator has the Smith-Ward property, and so the question truly concerns $d$-tuples, for $d\geq 2$. If $d\geq 3$, then there exists
a $d$-tuple of selfadjoint operators that does not have the Smith-Ward property; for $d=2$, it is still an open question 
as to whether all pairs of selfadjoint operators have this property. 

The Smith-Ward problem has a natural formulation in terms of operator-system theory. To explain this formulation, we first settle on some notation.  

If $\osr$ is an operator subsystem of a unital C$^*$-algebra $\A$, namely a linear subspace that contains the multiplicative
identity of $\A$ and is closed under the involution $x\mapsto x^*$, then we let $\cstar(\osr)$ denote the C$^*$-subalgebra
of $\A$ generated by $\osr$. If $\jay\subseteq\A$ is an ideal, then $\pi_\jay$ denotes the canonical $*$-homomorphism $\A\rightarrow \A / \jay$,
and elements $\pi_\jay(x)$ are denoted by $\dot{x}$. If $\phi:\osr\rightarrow\ost$ is any linear map, and $\mathfrak x=(x_1,\dots, x_d)$
is a $d$-tuple of elements from $\osr$, then $\phi(\mathfrak x)$ shall denote the $d$-tuple $(\phi(x_1),\dots, \phi(x_d))$, 
and we use the notation $\dot{\mathfrak x}$ for $\pi_\jay(\mathfrak x)$. Furthermore, 
the operator system determined by $\mathfrak x$ is denoted by $\mathcal O_\mathfrak x$. Thus,
\[
\mathcal O_{\mathfrak x}=\mbox{\rm Span}\{1,x_1,x_1^*,\dots,x_d,x_d^*\},
\]
where $1$ is the multiplicative identity of $\A$ (or the Archimedean order unit of $\osr$, in cases where $\osr$ is not presented explicitly
as an operator subsystem of a particular C$^*$-algebra). Lastly,
the C$^*$-algebra of bounded linear operator acting on the
separable Hilbert space $\ell^2(\mathbb N)$ is denoted by $\mathbb B$, and $\mathbb K$ denotes the ideal of compact operators. 
 
As explained in Proposition \ref{mr-ess} of this paper, 
the Smith-Ward property for a $d$-tuple $\mathfrak a$ of selfadjoint operators in $\mathbb B$ is the property
that the canonical embedding
$\iota:\mathcal O_{\dot{\mathfrak a}}\rightarrow\mathbb B/\mathbb K$ has a completely positive lift to $\mathbb B$. That is, 
the $d$-tuple $\mathfrak a$ has the Smith-Ward
property if and only if there exists a unital completely positive
(ucp) linear map $\psi:\mathcal O_{\dot{\mathfrak a}}\rightarrow\mathbb B$ such that $\iota=\pi_{\mathbb K}\circ\psi$.
More generally, we say that an operator subsystem $\osr\subseteq\mathbb B$ has the Smith-Ward property if the
identity map $\iota:\dot{\osr}\rightarrow\mathbb B/\mathbb K$ has a ucp lift to $\mathbb B$, where $\dot{\osr}=\pi_{\mathbb K}(\osr)$. 
Thus, the Smith-Ward property is a property of operators, $d$-tuples of operators, or operator systems of (Hilbert space) operators exhibited by 
features of their images in the Calkin algebra $\mathbb B/\mathbb K$.

The Brown-Douglas-Fillmore theory of C$^*$-algebra extensions 
\cite{arveson1977,brown--douglas--fillmore1977}
provides a powerful tool to analyse the Smith-Ward property.
Recall, for a unital separable C$^*$-algebra $\A$, an \emph{extension of $\mathbb K$ by $\A$}
 is a unital $*$-monomorphism $\sigma:\A\rightarrow\mathbb B / \mathbb K$.
 The set $\mbox{\rm Ext}(\A)$ of equivalence classes $[\sigma]$ of extensions $\sigma$
 of $\mathbb K$ by $\A$ has the structure of an additive semigroup, and is a group if the C$^*$-algebra $\A$ is nuclear.

 \begin{theorem}\label{inv-ext} If an operator subsystem $\osr$ of
 $\mathbb B $ is such that
 the canonical embedding $\iota:\cstar(\dot{\osr})\rightarrow\mathbb B / \mathbb K$ corresponds to an
 invertible element $[\iota]$ in  $\mbox{\rm Ext}(\cstar(\dot{\osr}))$, then $\osr$ has the Smith-Ward property.
 \end{theorem}
 
 \begin{proof} By \cite[p.~353]{arveson1977}, $[\iota]$ is invertible in $\mbox{\rm Ext}(\cstar(\dot{\osr}))$
 if and only if there is a ucp map $\Psi:\cstar(\dot{\osr})\rightarrow\mathbb B$ such that $\iota=\pi_{\mathbb K}\circ \Psi$.
 The restriction $\psi$ of $\Psi$ to $\dot{\osr}$ is a ucp lift of the canonical embedding
 $\dot{\osr}\rightarrow\mathbb B / \mathbb K$; hence, the operator system $\osr$ has the Smith-Ward property.
 \end{proof} 
 
 \begin{corollary}\label{ext is a group}
 If an operator subsystem $\osr$ of
 $\mathbb B $ is such that
 $\mbox{\rm Ext}(\cstar(\dot{\osr}))$ is a group, then $\osr$ has the Smith-Ward property.
 \end{corollary}
 
 Because $\mbox{\rm Ext}(\A)$ is a group if $\A$ is nuclear, the nuclearity of $\cstar(\dot{\osr})$ is a sufficient condition
 for an operator system $\osr\subseteq\mathbb B$ to have the Smith-Ward property. Further, because the nuclearity of unital C$^*$-algebras
 is preserved under unital $*$-homomorphisms, the nuclearity of $\cstar(\osr)$ is also a sufficient condition for $\osr$ to have
 the Smith-Ward property.
  
 \begin{corollary}\label{nuclear} If $\osr$ is an operator subsystem of $\mathbb B$ such that $\cstar(\osr)$ or $\cstar(\dot{\osr})$ is
 nuclear, then $\osr$ has the Smith-Ward property.
 \end{corollary}
 
 In particular, we also note:
 
 \begin{corollary}\label{abelian} Every $d$-tuple of commuting selfadjoint operators, or  selfadjoint operators commuting modulo $\mathbb K$,
 has the Smith-Ward property.
 \end{corollary}

 These results indicate that, if the C$^*$-algebra $\cstar(\dot{\osr})$ has reasonably good features, such as those
 indicated by Theorem \ref{inv-ext} and Corollary \ref{nuclear},
 then $\osr$ will possess the Smith-Ward property. However, to use these results, 
 one needs sufficient information about the C$^*$-algebra generated by the operator system $\osr$. In contrast,
this paper focuses on noncommutative geometric properties of a finite-dimensional operator system $\osr$ or $\dot{\osr}$,
with the goal of understanding how the geometric information encoded by the matrix range of 
a spanning set of linear basis for $\osr$ or $\dot{\osr}$ implies the Smith-Ward property for the operator system.

An operator system of the form $\mathcal O_\mathfrak x$ is completely determined up to unital complete order isomorphism
by the matrix range $W(\mathfrak x)$ of $\mathfrak x$ \cite{davisdon--dor-on--shalit--solel2017}, which is a graded set of $d$-tuples
of matrices determined by the images $\phi(\mathfrak x)$ of $\mathfrak x$ by matrix states $\phi$ on $\mathcal O_\mathfrak x$. 
Matrix ranges are compact noncommutative convex sets, and many of these sets can be viewed as noncommutative analogues of
specific compact convex subsets of $\mathbb C^d$ or $\mathbb R^d$. For example, the closed unit Euclidean ball in $\mathbb C^d$
admits many noncommutative complex balls of interest \cite{davisdon--dor-on--shalit--solel2017,helton--klep--mcculllough--schweighofer2019}, 
which are graded sets sharing the property that the first set in the grading
is the closed unit Euclidean ball in $\mathbb C^d$. 
As a sample result, we show in Theorem \ref{nc oh cmplx ball} that if the essential matrix range of $\mathfrak x$
is the OH noncommutative complex Euclidean ball, as defined in \cite{helton--klep--mcculllough--schweighofer2019}, 
then the $d$-tuple $\mathfrak x$ has the Smith-Ward property.

 Our main results present a variety of compact matrix convex sets $K_{\rm nc}$ that possess \emph{Smith-Ward geometry}, which is the 
 property that a $d$-tuple $\mathfrak x$ of operators necessarily has the Smith-Ward property if the essential matrix range $W_{\rm e}(\mathfrak x)$ 
 of $\mathfrak x$ is $K_{\rm nc}$. The example mentioned earlier asserts the OH noncommutative complex Euclidean ball has Smith-Ward geometry. 
 
 Two cases of special interest are the compact matrix convex sets $K^{\rm min}$ and $K^{\rm max}$ determined by a given compact convex subset $K$
 of $\mathbb C^d$ or $\mathbb R^d$ (e.g., the closed Euclidean ball).
 We show that the case of $K^{\min}$ can be analysed through the use of function systems, and in this regard it is useful to consider operator systems of 
 $d$-tuples $\mathfrak a$ of commuting selfadjoint operators as operator subsystems of their minimal C$^*$-covers $\cstare(\mathcal O_\mathfrak a)$ \cite{hamana1979b}.
 Motivated by an observation of Kavruk \cite{kavruk2014} that links 
 the Smith-Ward Problem, the Connes Embedding Prolem, and the C$^*$-Nuclearity Problem
 in the setting of $3$-dimensional operator systems, we also show directly that certain Hilbert space operators generate C$^*$-nuclear operator systems,
 including proper isometries and, more generally, contractions $x\in\mathbb B$ with spectrum $\sigma(x)\supseteq S^1$ (Theorem \ref{sw-example}).

 \section{Matrix Convexity and the Lifting Property}
 
 \subsection{Matrix ranges and convexity}
 
 A \emph{matrix state} on an operator system $\osr$ is a ucp map $\phi:\osr\rightarrow\M_n(\mathbb C)$, for some $n\in\mathbb N$.
 The \emph{matrix range} of a $d$-tuple $\mathfrak x$ 
 of elements from an operator system $\osr$ is the graded set $W(\mathfrak x)$ defined by
 \[
W(\mathfrak x)= \bigsqcup_{n\in\mathbb N} W_n(\mathfrak x),
\]
where, for $n\in\mathbb N$, 
\[
W_n(\mathfrak x)=\left\{ \phi(\mathfrak x)\,|\,
\phi:\cstar\left(\mathcal O_{\mathfrak x}\right)\rightarrow\M_n(\mathbb C)
\mbox{ is a matrix state}\right\}.
\]

The first set in the grading of the matrix range $W(\mathfrak x)$ is $W_1(\mathfrak x)$, which is most commonly known
as the \emph{joint numerical range} of the $d$-tuple $\mathfrak x$. 

The following theorem of Davidson, Dor-On, Shalit, and Solel \cite{davisdon--dor-on--shalit--solel2017}
explains how the matrix range $W(\mathfrak x)$
determines the operator system $\mathcal O_{\mathfrak x}$ uniquely up to canonical isomorphism. 

\begin{theorem}\label{ddss} 
If $\mathfrak x$ and $\mathfrak y$ are $d$-tuples of elements in operator systems $\osr$ and $\ost$, respectively,
then the following statements are equivalent:
\begin{enumerate}
\item $W(\mathfrak x)=W(\mathfrak y)$;
\item $\mathcal O_{\mathfrak x} \simeq \mathcal O_{\mathfrak y}$, which is to say
there exists a unital linear bijection
$\phi:\mathcal O_{\mathfrak x}\rightarrow\mathcal O_{\mathfrak y}$ for which both $\phi$ and $\phi^{-1}$ are completely positive 
and such that $\phi(\mathfrak x)=\mathfrak y$.
\end{enumerate}
\end{theorem}

Essential matrix ranges are defined for $d$-tuples of operators acting on $\ell^2(\mathbb N)$. We shall say
a matrix state $\phi$ on an operator subsystem $\osr\subseteq\mathbb B$ is an \emph{essential matrix state} on $\osr$
if $\phi(K)=0$ for every compact operator $K\in\cstar(\osr)$. 

The \emph{essential matrix range} of  $\mathfrak x\in\mathbb B^d$ 
is the graded set $W_{\rm e}(\mathfrak x)$ defined by
 \[
W_{\rm e}(\mathfrak x)= \bigsqcup_{n\in\mathbb N} W_{n,{\rm e}}(\mathfrak x),
\]
where, for $n\in\mathbb N$, 
\[
W_{n,{\rm e}}(\mathfrak x)=\left\{ \phi(\mathfrak x)\,|\,
\phi:\cstar\left(\mathcal O_{{\mathfrak x}}\right)\rightarrow\M_n(\mathbb C)
\mbox{ is an essential matrix state}
\right\}.
\]

It is well-known and simple to verify
that $W_{\rm e}(\mathfrak x)$ and $W(\dot{\mathfrak x})$ coincide as graded sets. Moreover, it is established in 
\cite[Theorem 2.2]{li--paulsen--poon2019} that
\begin{equation}\label{e:lpp}
W_{\rm e}(\mathfrak x)=\displaystyle\bigcap_{\mathfrak j\in\mathbb K^d}W (\mathfrak x + \mathfrak j).
\end{equation}

\begin{proposition}\label{mr-ess} The following statements are equivalent for $\mathfrak x\in\mathbb B^d$:
\begin{enumerate}
\item $\mathfrak x$ has the Smith-Ward property---that is, there exists a $d$-tuple $\mathfrak k$ of compact operators
such that $W(\dot{\mathfrak x})=W(\mathfrak x + \mathfrak k)$;
\item there exists a linear map $\tau:\mathcal O_{\mathfrak x}\rightarrow\mathbb K$ such that 
$W(\dot{\mathfrak x})=W(\mathfrak x + \tau(\mathfrak x))$;
\item the identity embedding $\mathcal O_{\dot{\mathfrak x}}\rightarrow\mathbb B / \mathbb K$ has a ucp lift to $\mathbb B$. 
\end{enumerate}
\end{proposition}

\begin{proof} Assume (1), and let $\mathfrak k$ be a 
$d$-tuple of compact operators
such that $W(\dot{\mathfrak x})=W(\mathfrak x + \mathfrak k)$. 
Hence, there is a unital complete order isomorphism
$\phi:\mathcal O_{\dot{\mathfrak x}}\rightarrow\mathcal O_{{\mathfrak x + \mathfrak k}}$, and so the ucp map
$\phi$ is a ucp lift to $\mathcal B$ of the identity map on $\mathcal O_{\dot{\mathfrak x}}$, which proves (3).

Now assume (3). Thus,
the identity embedding $\iota:\mathcal O_{\dot{\mathfrak x}}\rightarrow\mathbb B / \mathbb K$
has a completely positive lifting $\psi$ to $\mathbb B$.
For each $j$, the operator $\psi(\dot{x}_j)\in\mathbb B$ satisfies $\pi_{\mathbb K}\left( \psi(\dot{x}_j)\right)=\dot{x}_j$. Thus,
$x_j-\psi(\dot{x}_j)\in\ker \pi_{\mathbb K}=\mathbb K$, 
which implies $\psi(\dot{x}_j)=x_j+k_j$, for some compact operator $k_j$. Further, the linear map 
$\tau:\mathcal O_\mathfrak x\rightarrow\mathbb B$ defined by $\tau(y)=\psi(\dot{y})-y$, for $y\in\mathcal O_\mathfrak x$,
has range contained in $\mathbb K$. Hence,
$W(\mathfrak x + \tau(\mathfrak x))\subseteq W(\dot{\mathfrak x})$. 
Thus, using the relation (\ref{e:lpp}), we have
\[
W(\mathfrak x + \tau(\mathfrak x))\subseteq W(\dot{\mathfrak x})=\displaystyle\bigcap_{\mathfrak j\in\mathbb K^d}W (\mathfrak x + \mathfrak j)
\subseteq W(\mathfrak x + \tau(\mathfrak x)),
\]
completing the proof of (2). Statement (1) follows immediately from (2).
\end{proof}

The linearity relationship in (2) of Theorem \ref{mr-ess} above was noted earlier in \cite{robertson--smith1989}.

A \emph{matrix convex set in $\mathbb C^d$} is a graded set $K_{\rm nc}$
defined by
 \[
K_{\rm nc}= \bigsqcup_{n\in\mathbb N} K_n,
\]
where, for $n\in\mathbb N$, $K_n\subseteq \M_n(\mathbb C)^d$, and the following closure property is satisfied:
if, for each $m\in\mathbb N$, whenever $\gamma_j:\mathbb C^n\rightarrow\mathbb C^{n_j}$, for $j=1,\dots m$, 
are linear transformations such that
\begin{equation}\label{e:m-coeff}
\displaystyle\sum_{j=1}^m\gamma_j^*\gamma_j=1_n\in\M_n(\mathbb C),
\end{equation}
then, for any
$\mathfrak a_j\in K_{n_j}$, with $j=1,\dots, m$, we have
\begin{equation}\label{e:m-comb}
\displaystyle\sum_{j=1}^m\gamma_j^*\mathfrak a_j\gamma_j \in K.
\end{equation}
In the sum above, 
the individual summands are defined by
\[
\gamma^*\mathfrak a \gamma=(\gamma^*a_1\gamma, \dots,\gamma^*a_d\gamma),
\]
for every $d$-tuple $\mathfrak a$ of $k\times k$
complex matrices and linear transformation $\gamma:\mathbb C^n\rightarrow\mathbb C^k$.
Sums of the form (\ref{e:m-comb}) in which (\ref{e:m-coeff}) holds are called \emph{matrix-convex combinations} of the 
elements $\mathfrak a_j$ via \emph{matrix-convex coefficients} $\gamma_j$.

If every $K_n$ is compact in $\M_n(\mathbb C)^d$, then the graded set $K_{\rm nc}$ is said to be a \emph{compact matrix convex set}.
A typical example of a compact matrix convex set in $\mathbb C^d$ is the 
matrix range $W(\mathfrak x)$ of a $d$-tuple $\mathfrak x$ of elements from an operator system.

\begin{definition}[Smith-Ward Geometry]\label{swg} 
A compact matrix convex set $K_{\rm nc}$ in $\mathbb C^d$ has \emph{Smith-Ward geometry} if
$\mathfrak x$ has the Smith-Ward property, for every $d$-tuple of $\mathfrak x$ of operators for which 
$W_{\rm e}(\mathfrak x)=K_{\rm nc}$.
\end{definition} 

One of the main contributions of this paper is to identify some important cases of compact matrix convex sets with Smith-Ward geometry.

Each set $K_n$ in the graded matrix convex set $K_{\rm nc}$ is convex, 
and there may be many matrix convex sets $J_{\rm nc}$ in $\mathbb C^d$
that have $J_1=K_1$. This leads to the notion of a  matrix convex set determined by a level-1 (in the grading) set.

\begin{definition}If $K\subseteq \mathbb C^d$ is a compact convex set, then a \emph{noncommutative realisation of $K$}
is a compact matrix convex set $K_{\rm nc}= \bigsqcup_{n\in\mathbb N} K_n$ such that $K_1=K$.
\end{definition}

As shown in \cite{passer--shalit--solel2018},
among all noncommutative realisations of a given compact convex set $K\subseteq\mathbb C^d$, there are two
extremal ones, $K^{\rm min}$ and $K^{\rm max}$, with the property
\[
K^{\rm min}\subseteq K_{\rm nc} \subseteq K^{\rm max},
\]
as graded sets,
for every noncommutative realisation $K_{\rm nc}$ of $K$. 

The sets $K^{\rm min}$ and $K^{\rm max}$ are defined as follows:
\begin{enumerate}
\item $K_n^{\rm max}$ consists of all $d$-tuples $\mathfrak a=(a_1,\dots,a_d)$ 
of $n\times n$ matrices for which $W_1(\mathfrak a)\subseteq K$; and
\item $K_n^{\rm min}$ consists of all $d$-tuples $\mathfrak a=(a_1,\dots,a_d)$ 
of $n\times n$ matrices for which there exist positive 
$h_1,\dots, h_m\in M_n(\mathbb C)$ and $\lambda_j=(\lambda_{j1},\dots,\lambda_{jd})\in K$, for $j=1,\dots, m$, 
such that 
\[
1_n=\sum_{j=1}^m h_j  \,\mbox{ and }\,
a_\ell = \sum_{j=1}^m \lambda_{j\ell}h_j, \,\mbox{ for each }\ell=1,\dots,d.
\]
\end{enumerate}
The graded sets $K^{\rm min}=\displaystyle\bigsqcup_{n\in\mathbb N} K_n^{\rm min}$ and
$K^{\rm max}=\displaystyle\bigsqcup_{n\in\mathbb N} K_n^{\rm max}$ are called the \emph{minimal} and \emph{maximal}
noncommutative realisations of $K$.

From the defining conditions above, 
the elements of $K^{\rm min}$ are those,
and only those, matrix tuples obtained by matrix-convex combinations of elements in $K$. 
Equivalently, a $d$-tuple $\mathfrak x=(x_1,\dots,x_d)$ of matrices
belongs to $K^{\rm min}$ if and only if $x_1,\dots,x_d$ have a joint dilation to $d$ commuting normal operators with joint spectrum
$\overline{\partial_{\rm ext}K}$, where $\partial_{\rm ext}K$ is the set of extreme points of $K$ \cite{davisdon--dor-on--shalit--solel2017}.

The definition of noncommutative realisation of $K$ applies equally well to compact convex subsets $K$ of $\mathbb R^d$.
In such cases, the elements of any noncommutative realisation $K_{\rm nc}$ of $K$ are $d$-tuples of selfadjoint matrices.
Furthermore, it can happen that $K^{\rm min}=K^{\rm max}$; a necessary and sufficient condition for this to occur is that
$K$ be a simplex \cite[Theorem 4.1]{passer--shalit--solel2018}.

Among the compact matrix convex subsets $K_{\rm nc}$ of interest in this paper are
certain noncommutative cubes, polydiscs, and Euclidean balls. In other words,
we examine certain noncommutative realisations
$K_{\rm nc}$ of
\begin{enumerate}
\item[{(i)}] $K=[-1,1]^d$, the $d$-cube in $\mathbb R^d$
obtained via the Cartesian product of $d$ copies of the interval $[-1,1]$, 
\item[{(ii)}] $K=\overline{\mathbb D}^d$, the convex set in $\mathbb C^d$
obtained via the Cartesian product of $d$ copies of the closed unit disc $\overline{\mathbb D}$  in $\mathbb C$, and
\item[{(iii)}] $K=\{\xi\in\mathbb C^d\,|\,|\xi_1|^2+\cdots+|\xi_d|^2\leq 1\}$, the closed 
Euclidean ball in $\mathbb C^d$.
\end{enumerate}

\subsection{Smith-Ward geometry and dilations}

The Stinespring dilation theorem assert that, if $\phi:\A\rightarrow\B(\H)$ is a ucp map, for some unital C$^*$-algebra $\A$
and Hilbert space $\H$, then there exist a unital $*$-representation $\pi:\A\rightarrow\B(\H_\pi)$ of $\A$ on a Hilbert space $\H_\pi$
and a linear isometry $v:\H\rightarrow\H_\pi$ such that $\phi(a)=v^*\pi(x)v$, for every $a\in\A$. If $\A$ is a full matrix algebra $\M_n(\mathbb C)$, 
then $\A$ has a unique irreducible representation up to unitary equivalence; therefore, in such
cases, every unital $*$-representation of $\A$ has the form $x\mapsto x\otimes 1_\K$, for a Hilbert space $\K$ of dimension
determined by the multiplicity of $\pi$. This observation allows one to establish the following dilation theorem.

\begin{theorem}\label{dilation}
If the matrix range $W(\mathfrak a)$ of a $d$-tuple $\mathfrak a$ of matrices has Smith-Ward geometry, then, for every
$\mathfrak x\in\mathbb B^d$ satisfying $W_{\rm e}(\mathfrak x)=W(\mathfrak a)$, there exist a Hilbert space $\K$ and
$\mathfrak j\in\mathbb K^d$ such that $(a_1\otimes 1_\K,\dots,a_d\otimes 1_\K)$ is a dilation of
$(x_1+j_1,\dots,x_d+j_d)$.
\end{theorem}

\begin{proof} Assume $\mathfrak a$ is a $d$-tuple of $n\times n$ matrices
so that $\mathcal O_\mathfrak a\subseteq\M_n(\mathbb C)$.
By hypothesis, $W(\mathfrak x + \mathfrak j)=W(\mathfrak a)$, for some $\mathfrak j\in\mathbb K^d$.
Therefore, $\mathfrak x+\mathfrak j=\phi(\mathfrak a)$, for some ucp map $\phi:\mathcal O_\mathfrak a\rightarrow \mathcal O_{\mathfrak x + \mathfrak j}$.
Extending $\phi$ to a ucp map $\Phi:\M_n(\mathbb C)\rightarrow\mathbb B$, and by 
using the Stinespring dilation theorem and the representation theory discussed above, 
we deduce there are a Hilbert space $\K$ and linear isometry $v:\mathbb C^n\rightarrow\mathbb C^n\otimes\K$
such that $\phi(y)=v^*(y\otimes 1_\K)v$, for every $y\in\M_n(\mathbb C)$.
\end{proof}

\subsection{The lifting property}

A stronger property than the Smith-Ward property is the lifting property for operator subsystems of the Calkin algebra. 

An operator system $\osr$ has the lifting property if every ucp map $\phi:\osr\rightarrow \A/\K$, where $\K$ is a closed ideal of a unital C$^*$-algebra
$\A$, admits a ucp lift $\psi:\osr\rightarrow\A$ such that $\phi=\pi_\K\circ \psi$.
Thus, if $\osr\subseteq\mathbb B$ is an operator subsystem such that $\dot{\osr}$ has the lifting property, then the identity embedding 
$\dot{\osr}\rightarrow\mathbb B / \mathbb K$, in particular, has a ucp lift to $\mathbb B$. In other words, if $\osr\subseteq\mathbb B$ is an 
operator subsystem such that $\dot{\osr}$ has the lifting property, 
then $\osr$ has the Smith-Ward property. 

The following criterion connects the lifting property to operator system tensor products.

\begin{theorem}\label{kavruk lifting}{\rm (\cite[Theorem 8.6]{kavruk--paulsen--todorov--tomforde2013})} 
A finite-dimensional operator system $\osr$ has the lifting property
if and only if $\osr\omin\mathbb B=\osr\omax\mathbb B$. 
\end{theorem}

Another property of operator systems that is stronger than the lifting property is the OMAX property.
An operator system $\osr$ is an \emph{OMAX operator system} if every 
positive linear map $\phi:\osr\rightarrow\ost$, for every operator system $\ost$,
is completely positive. 

\begin{theorem}{\rm (\cite[Lemma 9.10]{kavruk2014})} Every OMAX operator system has the lifting property.
\end{theorem}

The most well-known example of an OMAX operator system is a unital abelian C$^*$-algebra, 
but there are many examples of 
OMAX operator
systems that generate nonabelian C$^*$-algebras.
In particular: any $3$-dimensional
operator system of $2\times 2$ complex matrices forms an OMAX operator 
system (Theorem \ref{quadratic operators}), along with all finite direct sums
of OMAX operator systems \cite{li--poon2020}; the operator system of all complex
$n\times n$ Toeplitz matrices is an OMAX operator
system \cite{farenick2021}; and $\osr\omax\ost$ is an OMAX operator system, if both $\osr$ and $\ost$
are OMAX operator systems \cite{kavruk--paulsen--todorov--tomforde2011}. Properties and additional examples of OMAX operator systems of matrices
are presented in \cite{li--poon2020}, yielding explicit examples of compact matrix convex sets with Smith-Ward geometry to which Theorem \ref{dilation}
applies.

The following theorem is our first, albeit abstract, example of how the geometry of the essential matrix range leads to the Smith-Ward property. Subsequent examples
will be more concrete in that they stem from noncommutative realisations of some particular compact convex sets.

\begin{theorem}\label{lp geom}
If $\mathfrak x$ is a $d$-tuple of elements from an operator system such that $\mathcal O_\mathfrak x$ has the lifting property, then
the matrix range $W(\mathfrak x)$ has Smith-Ward geometry.
\end{theorem}

\begin{proof} If $\mathfrak a\in\mathbb B_{\rm sa}^d$ is such that $W_{\rm e}(\mathfrak a)=W(\mathfrak x)$, then 
the operator systems $\mathcal O_{\dot{\mathfrak a}}$ and $\mathcal O_\mathfrak x$ are unitally completely order isomorphic, by Theorem \ref{ddss}.
Because the lifting property is invariant under unital complete order isomorphisms, $\mathcal O_{\dot{\mathfrak a}}$ has the lifting property, and so 
$\mathfrak a$ has the Smith-Ward property. By definition, this implies the compact matrix convex set $W(\mathfrak x)$ has Smith-Ward geometry.
\end{proof}

The proof of Theorem \ref{lp geom} makes use of the straigthforward fact that if $\mathcal O_\mathfrak a \simeq \mathcal O_\mathfrak b$, 
as defined in Theorem \ref{ddss}, then 
$\mathcal O_\mathfrak a$ has the lifting property if and only if $\mathcal O_\mathfrak b$ has the lifting property. 
The challenge in addressing the Smith-Ward problem
is that the lifting property is stronger than the property that the identity embedding $\ost\rightarrow\mathbb B /\mathbb K$ 
of an operator subsystem $\ost$ of the Calkin
algebra $\mathbb B /\mathbb K$ has a ucp lift to $\mathbb B$. Thus, 
if one has $d$-tuples $\mathfrak a$ and $\mathfrak b$ of operators such that $\mathcal O_{\dot{\mathfrak a}} \simeq \mathcal O_{\dot{\mathfrak b}}$,
then Smith-Ward property of $\mathcal O_a$ may not necessarily transfer to $\mathcal O_b$ through the isomorphism.

\section{Noncommutative Squares, Cubes, Polydiscs, and Euclidean Balls}

In this section we show that certain realisations of some important compact convex sets have the Smith-Ward property.

 \subsection{Simplicial geometry}
 
 If $K\subseteq\mathbb R^d$ is a simplex, then there is a unique matrix convex set $K_{\rm nc}$ with $K_1=K$, as in this case we
 have $K^{\rm min}=K^{\rm max}$ \cite[Theorem 4.1]{passer--shalit--solel2018}
 
 \begin{proposition}\label{nuc tuples} If $K\subseteq \mathbb R^d$ is a simplex, then 
 the unique matrix convex set $K_{\rm nc}$ for which $K_1=K$ has Smith-Ward geometry.
\end{proposition}

\begin{proof} Because $K$ is a simplex, $K_{\rm nc}=K^{\rm max}=K^{\rm min}$.
Suppose now that $\mathfrak a\in\mathbb B_{\rm sa}^d$ is such that $W_{\rm e}(\mathfrak a)=K^{\rm max}$.
Then, by \cite[Proposition 2.1(2)]{passer--shalit--solel2018}, 
every positive linear map $\phi:\mathcal O_{\dot{\mathfrak a}}\rightarrow \B(\H)$ is a ucp map. 
Hence, $\mathcal O_{\dot{\mathfrak a}}$ is an OMAX operator system and has, therefore, the lifting property.
In other words, the operator $d$-tuple $\mathfrak a$ has the Smith-Ward property.
\end{proof}

 \subsection{Maximal noncommutative squares and polydiscs}
 
 \begin{theorem}\label{nc cube and polydisc} For every $d\in\mathbb N$, there exist $d$-tuples $\mathfrak u$ and $\mathfrak w$ of symmetries and unitaries, respectively, 
such that
\[
\left([-1,1]^d\right)^{\rm max} = W(\mathfrak u)=W_{\rm e}(\mathfrak u)\,\mbox{ and }\, \left(\overline{\mathbb D}^d\right)^{\rm max}= W(\mathfrak w)=W_{\rm e}(\mathfrak w).
\]
Moreover, the compact matrix convex sets $\left([-1,1]^d\right)^{\rm max}$
and $\left(\overline{\mathbb D}^d\right)^{\rm max}$ have Smith-Ward geometry.
\end{theorem}

\begin{proof} Let $*_1^d\mathbb Z_2$ denote the free product of $d$-copies of $\mathbb Z_2$ and $\mathbb F_d$ denote the free group on $d$ generators.
In the group C$^*$-algebras $\cstar(*_1^d\mathbb Z_2)$ and $\cstar(\mathbb F_d)$, the $\mathfrak u$ and $\mathfrak w$ denote $d$-tuples comprised of the canonical unitary generators of
$*_1^d\mathbb Z_2$ and $\mathbb F_d$, respectively, and let $\mathcal O_{\mathfrak u}$ and $\mathcal O_{\mathfrak w}$ be the operator systems determined by these canonical unitary
generators.
Thus, by the universal property of free groups, 
for every Hilbert space $\H$ and $d$-tuples $\mathfrak h=(h_1,\dots,h_d)$ of selfadjoint contractions 
and $\mathfrak x=(x_1,\dots,x_d)$ of contractions acting on $\H$, there exist ucp maps
$\phi:\mathcal O_{\mathfrak u}\rightarrow\mathcal O_{\mathfrak h}$ and $\psi:\mathcal O_{\mathfrak w}\rightarrow\mathcal O_{\mathfrak x}$
such that $\phi(u_j)=h_j$ and $\psi(w_j)=x_j$, for $j=1,\dots,d$.
Hence, 
\[
\left([-1,1]^d\right)^{\rm max} = W(\mathfrak u)\,\mbox{ and }\, \left(\overline{\mathbb D}^d\right)^{\rm max}= W(\mathfrak w).
\]

We now prove that $W(\mathfrak u)=W_{\rm e}(\mathfrak u)$ and $W(\mathfrak w)=W_{\rm e}(\mathfrak w)$.
Assume $\cstar(*_1^d\mathbb Z_2)$ and $\cstar(\mathbb F_d)$ are represented faithfully as unital C$^*$-subalgebras of $\mathbb B$. In place of $\mathfrak u$ and $\mathfrak w$,
take $\mathfrak u^{(\infty)}$ and $\mathfrak w^{(\infty)}$, formed by considering a direct sum of countably many copies of each $u_j$ and $w_k$. Thus, 
$\cstar(\mathcal O_{\mathfrak u^{(\infty)}})\cong\cstar(*_1^d\mathbb Z_2)$ and $\cstar(\mathcal O_{\mathfrak w^{(\infty)}})\cong\cstar(\mathbb F_d)$, 
but $\cstar(\mathcal O_{\mathfrak u^{(\infty)}})$ and $\cstar(\mathcal O_{\mathfrak w^{(\infty)}})$ contain no compact operators. Therefore, without loss of generality,
we may assume $\mathfrak u$ and $\mathfrak w$ are represented faithfully as $d$-tuples of free symmetries and free unitaries such that
$\cstar(\mathcal O_{\mathfrak u})\cap\mathbb K=\cstar(\mathcal O_{\mathfrak w})\cap\mathbb K=\{0\}$. Hence, 
$W(\mathfrak u)=W_{\rm e}(\mathfrak u)$ and $W(\mathfrak w)=W_{\rm e}(\mathfrak w)$.

We now show the assertions regarding Smith-Ward geometry. 
The operator systems $\mathcal O_\mathfrak u$ and $\mathcal O_\mathfrak w$ have the lifting property by
\cite[Theorem 4.3]{farenick--kavruk--paulsen--todorov2018}, for the case of $\mathcal O_\mathfrak u$, and 
 \cite[Theorem 6.3]{farenick--paulsen2012}, for the case of $\mathcal O_\mathfrak w$.
 Thus, if $\mathfrak a$ is a $d$-tuple of selfadjoint operators for which $W_{\rm e}(\mathfrak a)$ is 
 $\left([-1,1]^d\right)^{\rm max}$ or $\left(\overline{\mathbb D}^d\right)^{\rm max}$, then 
 $\mathfrak a$ has the Smith-Ward property by Theorem \ref{lp geom}.
\end{proof}

\subsection{Maximal elliptical discs}

\begin{theorem}\label{ng ellipse}
If $K\subseteq\mathbb R^2$ is a compact convex body such that the topological boundary of $K$ is an ellipse, 
then $K^{\rm max}$ has Smith-Ward geometry.
\end{theorem}

\begin{proof}
Suppose that $\mathfrak a=(a_1,a_2)\in\mathbb B_{\rm sa}^2$ satisfies $W_{\rm e}(\mathfrak a)=K^{\rm max}$, and let $x=a_1+ia_2$.
Thus, in considering $K$ as a nondegenerate elliptical disc in $\mathbb C$, we have that the essential numerical range of $x$
is $K$. As an elliptical disc, the set $K$ also determines
a $2\times 2$ complex matrix $y$, unique up to unitary equivalence, such that $W_1(y)=K$. Moreover, because 
every matrix is unitarily similar to a matrix with constant diagonal entries, we may assume $y$ has the form
\[
y=\left[\begin{array}{cc}
		\alpha&\beta\\\gamma&\alpha
	\end{array}\right],
\]
for some $\alpha, \beta,\gamma\in\mathbb C$. Hence, the operator system $\mathcal O_{\mathfrak y}$,
where $\mathfrak y=(\Re y, \Im y)$, is the operator system of $2\times 2$ complex Toeplitz matrices, 
which is an OMAX operator system \cite{farenick2021}. 

The compact matrix convex set $K^{\rm max}$ is precisely the matrix range of $y$ \cite{tso--wu1999}. Thus,
$W(\dot{\mathfrak a})=W(\dot{x})=K^{\rm max}=W(y)$ implies $\mathcal O_{\dot{\mathfrak a}}\simeq\mathcal O_{\mathfrak y}$.
Hence, $\mathcal O_{\dot{\mathfrak a}}$ is an OMAX operator system, and therefore has the lifting property, 
proving $K^{\rm max}$ has Smith-Ward geometry.
\end{proof}

\subsection{Noncommutative complex Euclidean balls} 
The \emph{OH noncommutaive complex Euclidean $d$-ball} is the graded set $\mbox{\rm OH-Ball}_{\mathbb C}(d)$ of all $d$-tuples 
$\mathfrak c=(c_1,\dots,c_d)$ of complex
matrices for which $\displaystyle\sum_{j=1}^d c_j^*c_j\leq 1$ \cite{helton--klep--mcculllough--schweighofer2019}.

We shall draw substantially on the work of Courtney \cite{courtney2021} to prove the following theorem.
  
\begin{theorem}\label{nc oh cmplx ball} The OH
noncommutative complex Euclidean $d$-ball 
has Smith-Ward geometry.
\end{theorem}

\begin{proof} Consider the universal C$^*$-algebra $\mathcal P_d^{\mathbb C}$ defined by
\[
\mathcal P_d^{\mathbb C}=\cstar \langle z_1,\dots,z_d \,|\,\displaystyle\sum_{j=1}^d z_j^*z_j=1 \rangle .
\]
Let $\mathcal{OP}_d^{\mathbb C}$ denote the operator subsystem of $\mathcal P_d^{\mathbb C}$ determined by the $d$-tuple $\mathfrak z$
of generators $z_1,\dots,z_d$ of $\mathcal P_d^{\mathbb C}$. We claim that $\mbox{\rm OH-Ball}_{\mathbb C}(d)=W(\mathfrak z)$.

If $\mathfrak c\in W(\mathfrak z)$, then there are $n\in\mathbb N$ and a ucp map 
$\phi:\mathcal P_d^{\mathbb C}\rightarrow\M_n(\mathbb C)$ with
$\phi(z_j)=c_j$, for each $j=1,\dots, d$. By the Kadison-Schwarz inequality,
\[
c_j^*c_j = \phi(z_j)^*\phi(z_j) \leq \phi(z_j^*z_j) \Longrightarrow \sum_{j=1}^d c_j^*c_j \leq \phi\left(\sum_{j=1}^d z_j^*z_j\right) = 1,
\]
and so $W_1(\mathfrak c)\subseteq \mbox{\rm OH-Ball}_{\mathbb C}(d)$. Hence, $\mathfrak c\in \mbox{\rm OH-Ball}_{\mathbb C}(d)$.

Conversely, assume $\mathfrak c$ is a $d$-tuple of $n\times n$ matrices
such $\mathfrak c\in \mbox{\rm OH-Ball}_{\mathbb C}(d)$. As is done in the proof of \cite[Proposition 4.1]{courtney2021},
let 
\[
\tilde c_1 = \left[\begin{array}{cc} c_1 & 0 \\ g& 0 \end{array}\right], \,
\tilde c_2 = \left[\begin{array}{cc} c_2 & 0 \\ 0& 1 \end{array}\right], \,\mbox{ and }\,
\tilde c_j = \left[\begin{array}{cc} c_j & 0 \\ 0& 0 \end{array}\right] \,\mbox{ for }j\geq3,
\]
where $g=(1-\displaystyle\sum_{j=1}^d c_j^*c_j)^{1/2}$. 
Because $\displaystyle\sum_{j=1}^d \tilde c_j^*\tilde c_j=1$,
there is a unital $*$-homomorphism $\pi:\mathcal P_d^{\mathbb C}\rightarrow\M_{2n}(\mathbb C)$ such that $\pi(\mathfrak z)=\tilde{\mathfrak c}$.
Composing $\pi$ with the compression to the $n\times n$ upper lefthand corner of $\M_{2n}(\mathbb C)$ leads to a ucp
map $\phi:\mathcal P_d^{\mathbb C}\rightarrow\M_{n}(\mathbb C)$ such that $\phi(\mathfrak z)= \mathfrak c$. Thus, $\mathfrak c\in W(\mathfrak z)$
which completes the proof of $\mbox{\rm OH-Ball}_{\mathbb C}(d)=W(\mathfrak z)$.

Because the C$^*$-algebra $\mathcal P_d^{\mathbb C}=\cstar(\mathcal O_\mathfrak z)$ has the lifting property \cite[Proposition 4.1]{courtney2021},
the matrix range of $\mathfrak z$
has the Smith-Ward property. 
\end{proof}
 
\subsection{Minimal noncommutative squares and discs}

The square $\displaystyle\square$ and disc $\overline{\mathbb D}$ 
are the compact convex subsets of $\mathbb C$ given by
\[
\begin{array}{rcl}
\displaystyle\square &=& [-1,1]\times [-1,1]\subset\mathbb C, \\ 
\overline{\mathbb D} &=& \{\zeta\in\mathbb C\,|\,|\zeta|\leq 1\}.
\end{array}
\]

If $u$ and $v$ denote free symmetries---that is, the canonical unitary generators of $\cstar(\mathbb Z_2 * \mathbb Z_2)$---and if
$w$ is a unitary operator such that $w^4=1$ and the spectrum $\sigma(w)$ coincides with the
fourth roots of unity, then the elements $u+iv$ and $(1+i)w$ each have numerical range equal to the square $\displaystyle\square$ and
\[
\begin{array}{rcl}
\displaystyle\square{}^{\rm min} &=& W_{\rm nc}\left( w+iw\right), \mbox{ and} \\
\displaystyle\square{}^{\rm max} &=& W_{\rm nc}\left( u+iv\right).
\end{array}
\]

The minimal and maximal matrix convex sets determined by the closed unit $\overline{\mathbb D}$ are the matrix ranges of the
bilateral shift operator $b$ acting on $\ell^2(\mathbb Z)$ and the nilpotent operator $s=\left[\begin{array}{cc} 0&2 \\ 0&0 \end{array}\right]$
acting on $\mathbb C^2$ \cite{ando1973}.
That is,
\[
\begin{array}{rcl}
\overline{\mathbb D}{}^{\rm min} &=& W_{\rm nc}\left( b\right), \mbox{ and} \\
\overline{\mathbb D}{}^{\rm max} &=& W_{\rm nc}\left( s\right).
\end{array}
\]

An interesting relationship holds
between the minimal noncommutative squares and the maximal noncommutative discs \cite[Theorem 2.5]{choi--li2000}:

\begin{theorem}[Choi-Li]\label{choi-li them} The following statements are equivalent for a matrix $y\in\M_n(\mathbb C)$:
\begin{enumerate}
\item $y\in \displaystyle\square{}^{\rm min}$;
\item $\frac{1}{1+i}\left[\begin{array}{cc} 0 & y^*+y \\ i(y^*-y) & 0 \end{array}\right]\in\overline{\mathbb D}{}^{\rm max}$.
\end{enumerate}
\end{theorem}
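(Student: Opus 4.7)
The plan is: write $y=h+ik$ with $h=\tfrac12(y+y^*)$ and $k=\tfrac1{2i}(y-y^*)$ selfadjoint; then $y^*+y=2h$ and $i(y^*-y)=2k$, and, using $\tfrac{2}{1+i}=1-i$, the matrix in (2) simplifies to $(1-i)\begin{pmatrix}0 & h\\ k & 0\end{pmatrix}$. Thus (2) becomes a numerical-radius bound on a specific $2n\times 2n$ off-diagonal block. On the other side, by Naimark's dilation theorem applied to the description $\square^{\min}=W_{\mathrm{mtrx}}((1+i)w)$, condition (1) is equivalent to the existence of commuting selfadjoint contractions $H,K$ on a larger Hilbert space together with an isometry $V$ satisfying $(h,k)=(V^*HV,V^*KV)$; equivalently, $y$ admits a normal dilation $N=H+iK$ with $\sigma(N)\subseteq\square$.

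For the direction $(1)\Rightarrow(2)$, the dilation identity $\begin{pmatrix}0&h\\k&0\end{pmatrix}=(V\oplus V)^*\begin{pmatrix}0&H\\K&0\end{pmatrix}(V\oplus V)$ together with the monotonicity of the numerical radius under compressions reduces the problem to bounding $w\bigl(\begin{pmatrix}0&H\\K&0\end{pmatrix}\bigr)$ for commuting selfadjoint contractions $H,K$. Simultaneously diagonalizing $H$ and $K$ via the spectral theorem then reduces this to a direct scalar calculation at each joint spectral value $(\lambda,\mu)\in\square$, and the factor $|1-i|=\sqrt{2}$ is absorbed in the final estimate.

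For $(2)\Rightarrow(1)$, I would use the characterization $\overline{\mathbb D}^{\max}=W_{\mathrm{mtrx}}(s)$, where $s=\begin{pmatrix}0&2\\0&0\end{pmatrix}$, to produce a ucp map $\phi:\mathcal O_s\to \M_{2n}$ sending $s$ to the block matrix appearing in (2). The Stinespring dilation of $\phi$ inherits the $2\times 2$ block structure of its target, and the main task is to extract from it a Naimark-type resolution of $y$ as a matrix convex combination of the four vertices $\{\pm 1\pm i\}$ of $\square$; this is precisely the assertion $y\in\square^{\min}$. The main obstacle lies in this second direction: converting a single-operator dilation (of an off-diagonal block matrix) into the commuting-pair dilation underlying $\square^{\min}$ requires careful bookkeeping of how the Stinespring data decomposes along the $2\times 2$ block structure, and it is here that the scaling factors $\tfrac1{1+i}$ and $(1-i)$ must conspire to make the correspondence sharply equivalent rather than a one-sided inclusion.
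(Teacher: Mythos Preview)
The paper does not give a proof of this statement; it is quoted from \cite[Theorem~2.5]{choi--li2000} without argument, so there is nothing to compare your approach against directly.

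That said, your plan for $(1)\Rightarrow(2)$ has a genuine gap at the very step you label ``direct scalar calculation.'' After compressing and simultaneously diagonalising, you need
\[
w\!\left((1-i)\left[\begin{array}{cc}0&\lambda\\ \mu&0\end{array}\right]\right)\le 1
\quad\text{for every }(\lambda,\mu)\in[-1,1]^2,
\]
equivalently $w\!\left[\begin{array}{cc}0&\lambda\\ \mu&0\end{array}\right]\le 1/\sqrt{2}$. But an elementary computation gives $w\!\left[\begin{array}{cc}0&\lambda\\ \mu&0\end{array}\right]=\tfrac12(|\lambda|+|\mu|)$, which equals $1$ at the corner $(\lambda,\mu)=(1,1)$; the factor $\sqrt{2}$ is \emph{not} absorbed. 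In fact the scalar $y=1+i$ already witnesses this: it lies in $\square^{\min}_1=\square$, yet $y^*+y=2$, $i(y^*-y)=2$, and the displayed block matrix is $(1-i)\left[\begin{array}{cc}0&1\\ 1&0\end{array}\right]$, whose numerical radius is $\sqrt{2}>1$, so it is not in $\overline{\mathbb D}^{\max}$. This shows the equivalence, as transcribed here, fails at level~$1$; you should consult the original Choi--Li formulation before attempting a proof, since the normalisation or the block entries appear to be misprinted.

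For $(2)\Rightarrow(1)$ your outline is too schematic to assess. The passage from a Stinespring dilation of a single ucp map $\mathcal O_s\to\M_{2n}(\mathbb C)$ to a \emph{commuting} selfadjoint dilation of the pair $(h,k)$ is precisely the substantive content of the Choi--Li argument, and ``careful bookkeeping of the $2\times 2$ block structure'' does not name a mechanism that produces commutativity. In the original paper this direction is handled by an explicit positivity computation adapted to the four vertices of the square, not by an abstract Stinespring decomposition.
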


By the well-known classical inequalities involving numerical radius and norm, we have the following sharp scaled inclusion:
\begin{equation}\label{e:sc2}
\overline{\mathbb D}{}^{\rm max} \subseteq 2\, \displaystyle \overline{\mathbb D}{}^{\rm min}.
\end{equation}
In the case of the square, 
there is also a sharp scaled inclusion \cite[p.~3233]{passer--shalit--solel2018}:
\begin{equation}\label{e:sc}
\displaystyle\square{}^{\rm max} \subseteq \sqrt{2}\, \displaystyle\square{}^{\rm min}.
\end{equation}

Such scaled inclusions explain the extent to which positive linear maps need to be scaled to become completely positive, as 
shown in the following proposition. 

\begin{proposition} The operator system $\mathcal O_{w+iw}$ is not an OMAX operator system; however, if 
$\psi:\mathcal O_{w+iw}\rightarrow\B(\H)$ is a unital positive linear map, then the unital linear map
$\phi:\mathcal O_{w+iw}\rightarrow\B(\H)$ in which $\phi(w+iw)=\frac{1}{\sqrt{2}}\psi(w+iw)$ and 
$\phi(w^*-iw^*)=\frac{1}{\sqrt{2}}\psi(w^*-iw^*)$ is completely positive.
\end{proposition}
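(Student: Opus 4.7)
The plan is to rephrase both claims via the correspondence between unital linear maps on $\mathcal O_{(1+i)w}=\mbox{\rm Span}\{1,(1+i)w,(1-i)w^*\}$ and containments of matrix ranges, and then invoke the sharp scaled inclusion \eqref{e:sc}. As setup, I would first note that $(1+i)w$ is normal with spectrum $\{\pm 1\pm i\}$, so $W_1((1+i)w)=\square$ and, by Theorem \ref{normal envelope}, $W_{\rm mtrx}((1+i)w)=\square^{\rm min}$. A unital linear map $\psi:\mathcal O_{(1+i)w}\to\B(\H)$ is determined by $Y=\psi((1+i)w)$, and the standard correspondences assert that $\psi$ is positive if and only if the classical numerical range satisfies $W_1(Y)\subseteq\square$, whereas $\psi$ is ucp if and only if $W_{\rm mtrx}(Y)\subseteq\square^{\rm min}$.

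For the failure of OMAX, I would argue by contradiction: if $\mathcal O_{(1+i)w}$ were OMAX, then every unital positive map would be ucp, so the two characterisations above would coincide, forcing $\square^{\rm max}\subseteq\square^{\rm min}$ at every matrix level and therefore $\square^{\rm min}=\square^{\rm max}$. This contradicts the sharpness of the constant $\sqrt{2}>1$ in \eqref{e:sc}; sharpness produces a level-$n$ element of $\square^{\rm max}_n$ that is not in $\square^{\rm min}_n$, and the associated unital positive map to $\M_n(\mathbb C)$ is then positive but not completely positive.

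For the scaled CP statement, let $\psi$ be unital and positive, set $Y=\psi((1+i)w)$, and let $\phi$ be the map defined in the proposition, so $\phi((1+i)w)=Y/\sqrt{2}$. The positivity characterisation gives $W_1(Y)\subseteq\square$. To prove $\phi$ is ucp, I would verify $W_{\rm mtrx}(Y/\sqrt{2})\subseteq\square^{\rm min}$ one matrix level at a time. Fix $n\in\mathbb N$ and an isometry $V:\mathbb C^n\to\H$; then $V^*YV$ is an $n\times n$ matrix, and compression shrinks the classical numerical range, so $W_1(V^*YV)\subseteq W_1(Y)\subseteq\square$. Identifying the complex matrix $V^*YV$ with the pair $(\mbox{\rm Re}(V^*YV),\mbox{\rm Im}(V^*YV))$ of selfadjoint contractions gives $V^*YV\in\square^{\rm max}_n$, whence \eqref{e:sc} yields $V^*(Y/\sqrt{2})V=V^*YV/\sqrt{2}\in\square^{\rm min}_n$. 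Since $n$ and $V$ are arbitrary, $W_{\rm mtrx}(Y/\sqrt{2})\subseteq\square^{\rm min}=W_{\rm mtrx}((1+i)w)$, and the ucp characterisation concludes that $\phi$ is ucp.

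The main (and only mildly nontrivial) technical point is the routine identification of a complex operator $Y$ with its selfadjoint pair of real and imaginary parts, so that \eqref{e:sc}, which is stated for matrix convex sets in $\mathbb R^2$, applies directly to compressions $V^*YV$. Once this identification is made, the proof is essentially a translation of \eqref{e:sc} via the positive/ucp versus numerical/matrix range correspondences.
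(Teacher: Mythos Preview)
Your argument is correct and is essentially the paper's proof unpacked: the paper simply cites \cite{li--poon2020} for the failure of OMAX and \cite[Proposition~2.1]{passer--shalit--solel2018} for the scaled complete positivity, and both of these amount precisely to the positive/ucp $\leftrightarrow$ numerical-range/matrix-range dictionary combined with the sharp inclusion~\eqref{e:sc}, which is exactly what you carry out in detail.

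One small imprecision worth noting: in the second part you verify $W_{\rm mtrx}(Y/\sqrt{2})\subseteq\square^{\rm min}$ only on compressions $V^*YV$ by isometries $V:\mathbb C^n\to\H$, whereas $W_n(Y)$ is defined via all ucp maps $\rho:\B(\H)\to\M_n(\mathbb C)$, and these are not all spatial compressions. The fix is immediate and changes nothing in your approach: for any ucp $\rho$ one still has $W_1(\rho(Y))\subseteq W_1(Y)\subseteq\square$ (because states on $\M_n(\mathbb C)$ compose with $\rho$ to give states on $\B(\H)$), so $\rho(Y)\in\square^{\rm max}_n$ and hence $\rho(Y)/\sqrt{2}=\rho(Y/\sqrt{2})\in\square^{\rm min}_n$ by~\eqref{e:sc}.
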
 

\begin{proof} The fact that the $\mathcal O_{w+iw}$ is not an OMAX operator system is established in 
\cite{li--poon2020}, while the complete positivity of $\phi$
follows from \cite[Proposition 2.1]{passer--shalit--solel2018} using the scaling constant $\sqrt{2}$
arising in (\ref{e:sc}) above.
\end{proof}

We have determined in Theorem \ref{nc cube and polydisc} that $\displaystyle\square{}^{\rm max}$ and
$\overline{\mathbb D}^{\rm max}$ have Smith-Ward geometry.  We now show
$\displaystyle\square{}^{\rm min}$ and $\overline{\mathbb D}^{\rm min}$ also have Smith-Ward geometry.

\begin{proposition}\label{eg}
The minimal noncommutative square $\displaystyle\square{}^{\rm min}$ and minimal noncommutative disc $\overline{\mathbb D}^{\rm min}$
have Smith-Ward geometry.
\end{proposition}

\begin{proof} Suppose $w$ is a unitary operator with spectrum equal to the 4-th roots of unity. 
Let
$x=\displaystyle\bigoplus_1^\infty(w+iw)\in\mathbb B$ and note that every eigenvalue of $x$ is an isolated eigenvalue
of infinite multiplicity. Thus, 
\[
W_{\rm e}(x)=W(x)=W(w+iw)=\displaystyle\square{}^{\rm min}.
\]
Viewing $w$ as a $4\times 4$ unitary diagonal matrix with diagonal entries $\pm 1, \pm i$, we note that 
\[
\mathcal O_{w+iw}=\{(\alpha+\beta_1,\alpha-\beta_1,\alpha+\beta_2,\alpha-\beta_2)\,|\, \alpha,\beta_1,\beta_2\in\mathbb C\},
\]
which is unitarily equivalent to the operator system dual 
$\mathrm{NC}(2)^\delta$ of the operator subsystem $\mbox{\rm NC}(2)$ of $\cstar(\mathbb Z_2 * \mathbb Z_2)$
determined by the canonical unitary generators of $\cstar(\mathbb Z_2 * \mathbb Z_2)$
\cite[Proposition 6.13]{farenick--kavruk--paulsen--todorov2014}. The operator system $\mbox{\rm NC}(2)$ 
has the property that,  for every 
unital C$^*$-algebra $\A$,
$\mbox{\rm NC}(2)\omin\A=\mbox{\rm NC}(2)\omax\A$ \cite[Theorem 7.3]{farenick--kavruk--paulsen--todorov2014}; 
therefore, its operator
system dual has the same property \cite[Theorem 4.1]{kavruk2015}. Hence, 
$\mathcal O_{w+iw}$ has the lifting property, which proves $\displaystyle\square{}^{\rm min}$ has Smith-Ward geometry.

The case of the minimal noncommutative disc $\overline{\mathbb D}^{\rm min}$ is similar. If $b$ is the bilateral shift operator on $\ell^2(\mathbb Z)$,
then $W(b)= \overline{\mathbb D}^{\rm min}$ and
$\mathcal O_b\simeq \mathcal S_1$, which is the operator system generated by the canonical unitary generator of $\cstar(\mathbb Z)$,
and has the property $\mathcal S_1\omin\A=\mathcal S_1\omax\A$, for every 
unital C$^*$-algebra $\A$ \cite[Propostion 4.3]{farenick--kavruk--paulsen--todorov2014}. Thus, $\mathcal S_1$ has the lifting property, implying
$\overline{\mathbb D}^{\rm min}$ has Smith-Ward geometry.
\end{proof}

\subsection{Minimal noncommutative power-stretched circles}

The previous examples included the closed unit disc, which is the convex hull of $S^1$.
One can embed $S^1$ into $\mathbb C^d$ with what is called a power stretching of
$S^1$ across $d$ complex dimensions: namely, the 
set $\mathbb S_d^1\subseteq\mathbb C^d$ defined by
\[
\mathbb S_d^1=\left\{(\lambda,\lambda^2,\dots,\lambda^d)\in\mathbb C^d\,|\,\lambda\in S^1\right\}.
\]

\begin{theorem} The minimal noncommutative power-stretched ball
$\left( \mbox{\rm Conv}(\mathbb S_d^1)\right)^{\rm min}$ has Smith-Ward geometry
and consists of all $d$-tuples $\mathfrak x=(x_1,\dots,x_d)$ of matrices for which the block-Toeplitz matrix
\[
 T_\mathfrak x = \left[ \begin{array}{ccccc} 1  & x_1^* & x_2^* & \dots & x_d^* \\
                                                              x_1 & 1 & x_1^* & \ddots & \vdots \\
                                                              x_2 & \ddots & \ddots & \ddots &  x_2^* \\
                                                              \vdots & \ddots & \ddots & \ddots & x_1^* \\
                                                              x_d & \dots & x_2 & x_1 & 1  
 \end{array}\right] 
 \]
is a positive matrix.
\end{theorem}

\begin{proof} It is shown in \cite{farenick2025} that $\left( \mbox{\rm Conv}(\mathbb S_d^1)\right)^{\rm min}=W(\mathfrak b)$,
where $\mathfrak b=(b,b^2,\dots,b^d)$ and $b$ is the bilateral shift operator. It is also shown in \cite{farenick2025} 
that a $d$-tuple $\mathfrak x$ of matrices belongs to $W(\mathfrak b)$ if and only if the matrix $T_\mathfrak x$ is positive.

Note that
\[
\mathcal O_\mathfrak b \simeq \mathcal O_{\dot{\mathfrak b}} \simeq C(S^1)_{(d)},
\]
where $C(S^1)_{(d)}$ is the Fej\'er-Riesz operator subsystem of $C(S^1)$ 
consisting of all $f\in C(S^1)$ having Fourier coefficients $\hat f(n)=0$ for all $n\in\mathbb D$ with $|n|> d$.
The operator system dual of $C(S^1)_{(d)}$ is $C(S^1)^{(d)}$, the operator system of complex $d\times d$
Toeplitz matrices \cite{connes--vansuijlekom2021,farenick2021}. Because $C(S^1)^{(d)}$ is an operator subsystem
of a nuclear C$^*$-algebra, namely $\M_d(\mathbb C)$, $C(S^1)^{(d)}$ is an exact operator system 
\cite[Corollary 5.8]{kavruk--paulsen--todorov--tomforde2013}. Therefore, its operator system dual, $C(S^1)_{(d)}$,
has the lifting property \cite[Theorem 6.6]{kavruk2014}. Hence, $\mathcal O_{\dot{\mathfrak b}}$ has the lifting property,
and so, by Theorem \ref{lp geom}, 
$W(\mathfrak b)$ has Smith-Ward geometry.
\end{proof}
 
\subsection{Questions}

In light of the examples presented in this section, one is led to ask:
\begin{enumerate}
\item For which compact convex sets $K\subseteq \mathbb R^d$ does $K^{\rm min}$ have Smith-Ward geometry?
\item For which compact convex sets $K\subseteq \mathbb R^d$ does $K^{\rm max}$ have Smith-Ward geometry?
\end{enumerate}

The first of these questions is considered in the next section by reframing it as a question concerning functions systems.

\section{Minimal C$^*$-Covers and Spectral Function Systems}
A \emph{C$^*$-cover} of an operator system $\osr$
is a pair $(\kappa, \A)$ consisting of a unital C$^*$-algebra $\A$
and a unital complete order embedding $\kappa:\osr\rightarrow\A$ such that $\A=\cstar\left(\kappa(\osr) \right)$.
A C$^*$-cover $(\iota,\B)$ is \emph{minimal} if, for every C$^*$-cover $(\kappa, \A)$ of $\osr$, there is a unital $*$-homomorphism
$\pi:\A\rightarrow\B$ such that $\iota=\pi\circ\kappa$. 

The minimal C$^*$-cover exists for every operator system $\osr$, and is unique
up to an isomorphism that fixes $\osr$ \cite{hamana1979b}. The minimal pair is denoted by $\left(\iota_{\rm e},\cstare(\osr)\right)$,
and is known in the literature as the \emph{C$^*$-envelope} of $\osr$.

A key concept for determining minimal C$^*$-covers is that of a boundary representation. If $\osr$ is an operator subsystem of a unital C$^*$-algebra,
then a \emph{boundary representation} of $\osr$ is an irreducible representation $\rho:\cstar(\osr)\rightarrow\B(\H_\rho)$ such that the ucp map
$\phi=\rho_{\vert\osr}:\osr\rightarrow\B(\H_\rho)$ has a unique completely positive extension (namely, $\rho$) to $\cstar(\osr)$. If $\mathcal J$
is the (\v Silov boundary) ideal of $\cstar(\osr)$ consisting of all $a\in\cstar(\osr)$ for which $\rho(a)=0$, 
for every boundary representation $\rho$ of $\osr$, then
the pair $(\pi_\jay, \cstar(\osr) /\jay)$, where $\pi_\jay$ is the canonical quotient homomorphism, 
is a minimal C$^*$-cover of $\osr$ \cite{davidson--kennedy2015}. That is,
$\cstare(\osr)=\cstar(\osr) / \jay$.

Below we deduce the minimal C$^*$-cover of the operator system generated by commuting selfadjoint operators. While this result appears to be
known to experts, we are unaware of any explicit reference.

\begin{theorem}\label{commuting} 
If $\mathfrak a=(a_1,\dots,a_d)$ is a $d$-tuple of
commuting selfadjoint operators, then a character $\rho:\cstar(\mathcal O_{\mathfrak a})\rightarrow\mathbb C$ is a boundary 
representation for $\mathcal O_{\mathfrak a}$ if and only if $\rho(\mathfrak a)$ is an extreme point of the numerical range
$W_1(\mathfrak a)$. Further,
\[
\cstare(\mathcal O_{\mathfrak a}) \cong C\left(\overline{\partial_{\rm ext}W_1(\mathfrak a)}\right).
\]
\end{theorem}

\begin{proof} Assume a character
$\rho:\cstar(\mathcal O_{\mathfrak a})\rightarrow\mathbb C$ is a 
boundary representation for $\mathcal O_{\mathfrak a}$. Thus, the state $\phi=\rho_{\vert\mathcal O_{\mathfrak a}}$ is a pure state
on $\mathcal O_{\mathfrak a}$ \cite[Proposition 2.12]{farenick--tessier2022}, and so $\phi(\mathfrak a)$ is an extreme point of $W_1(\mathfrak a)$
\cite{farenick2000}.

Conversely, suppose $\lambda\in \partial_{\rm ext}W_1(\mathfrak a)$. Thus, there is a 
pure state $\phi$ on $\mathcal O_{\mathfrak a}$ such that $\phi(\mathfrak a)=\lambda$. Let $\mathfrak C_\phi$ be the set of all states on 
$\cstar(\mathcal O_{\mathfrak a})$ extending $\phi$, which is a weak*-closed set in the state space of 
$\cstar(\mathcal O_{\mathfrak a})$. As such, $\mathfrak C_\phi$ is the closed convex hull of its extreme points. Because $\phi$ is pure,
any extreme point $\rho$ of $\mathfrak C_\phi$ is also an extreme point of the state space of $\cstar(\mathcal O_{\mathfrak a})$.
As $\cstar(\mathcal O_{\mathfrak a})$ is abelian, $\rho$ is necessarily multiplicative and is, therefore, uniquely determined by its value on
$\mathfrak a$. Hence, $\mathfrak C_\phi$ is a singleton set, implying $\phi$ is a boundary representation of $\mathcal O_\mathfrak a$,
completing the proof of the first part of the theorem.

Select a countable set $\{\lambda_n\}_n$ of extreme points of $W_1(\mathfrak a)$ such that $\{\lambda_n\}_n$ is dense in 
$\overline{\partial_{\rm ext}W_1(\mathfrak a)}$. Let $\mathfrak d$ be the operator $d$-tuple of diagonal operators formed by taking
a direct sum of the $\lambda_n$. Thus, $\mathfrak d$ has joint spectrum $\overline{\partial_{\rm ext}W_1(\mathfrak a)}$ and
\[
\cstar(\mathcal O_\mathfrak d) \cong C\left( \overline{\partial_{\rm ext}W_1(\mathfrak a)}\right).
\]
If $\rho_n$ denotes the character on $\cstar(\mathcal O_{\mathfrak a})$ induced by the extreme point $\lambda_n$, then
$\pi=\bigoplus_n\rho_n:\cstar(\mathcal O_{\mathfrak a})\rightarrow\cstar(\mathcal O_\mathfrak d)$ is a unital $*$-homomorphism.
Because, by
the noncommutative Choquet Theorem \cite{davidson--kennedy2015}, the norm of any matrix 
$X\in\M_n(\mathcal O_{\mathfrak a})$ is achieved at a boundary representation $\rho$ (that is, $\|X\|=\|\rho^{[n]}(X)\|$), 
the density of the sequence $\{\lambda_n\}_n$ in $\overline{\partial_{\rm ext}W_1(\mathfrak a)}$ implies
the map $\pi$ is a unital complete isometry on $\mathcal O_{\mathfrak a}$. Hence,  the pair
$(\pi, \cstar(\mathcal O_\mathfrak d))$ is a C$^*$-cover
of $\mathcal O_\mathfrak a$ and $\cstare(\mathcal O_\mathfrak a)\cong C\left( \overline{\partial_{\rm ext}W_1(\mathfrak a)}\right)$. 

Finally, let $\jay$ be the \v Silov boundary ideal in $\cstar(\mathcal O_\mathfrak a)$. If $x\in\jay$,
then $\rho_n(x)=0$, for every $n$, and so $x\in\ker\pi$. Conversely, if $x\in\ker\pi$, 
then $\rho_n(x)=0$, for every $n$. 
Because $\cstar(\mathcal O_\mathfrak a)/\jay\cong C\left( \overline{\partial_{\rm ext}W_1(\mathfrak a)}\right)$,
we deduce, by continuity, that $\rho(x)=0$, for every boundary representation $\rho$ of $\mathcal O_\mathfrak a$.
Hence, 
$\cstar(\mathcal O_\mathfrak d)=\cstar(\mathcal O_\mathfrak a) / \jay$,
which proves $\cstar(\mathcal O_\mathfrak d)=\cstare(\mathcal O_\mathfrak a)$.
\end{proof} 

The operator theory above yields the following conclusion for function systems. 

\begin{corollary} If $X\subset \mathbb C$ is a compact set and
\[
\mathcal F_X^{[d]}=\left\{\alpha_0 + \sum_{j=1}^d\beta_j z^j + 
\sum_{\ell=1}^d \gamma_\ell \overline{z}^\ell \,|\,\alpha_0,\beta_j,\gamma_\ell\in\mathbb C\right\}
\subseteq C(X),
\]
then:
\begin{enumerate} 
\item the Choquet boundary of $\mathcal F_X^{[d]}$ is homeomorphic to $\partial_{\rm ext} K_X$, 
where
\[
K_X=\mbox{\rm Conv}\left(\{(\lambda,\lambda^2,\dots,\lambda^d)\,|\,\lambda\in X\}\right);
\]
\item the minimal C$^*$-cover of $\mathcal F_X^{[d]}$ is isomorphic to $C(\overline{\partial_{\rm ext} K_X})$.
\end{enumerate}
\end{corollary}

We now turn to compact subsets $K\subseteq\mathbb R^d$ with the aim of assessing how their geometry relates to the Smith-Ward property.

\begin{definition}\label{defn:fs} If $K\subset\mathbb R^d$ is compact, then the operator subsystem $\mathcal E_K^{[d]}$ of $C(K)$ given by
\[
\mathcal E_K^{[d]}=\mbox{\rm Span}\{1,\varepsilon_1,\dots,\varepsilon_d\},
\]
where each $\varepsilon_j\in C(K)$ is defined by
\[
\varepsilon_j(\zeta_1,\dots,\zeta_d)=\zeta_j,\,\mbox{ for all } (\zeta_1,\dots,\zeta_d)\in K,
\]
is called the \emph{spectral function system determined by $K$}.
\end{definition}

In the case where the compact set $K\subset\mathbb R^d$ is also convex, then the spectral function system determined by $K$
is precisely the function system $A(K)$ is a continuous complex-valued affine functions on $K$.

\begin{theorem}\label{min-lp-swg} If $K\subseteq\mathbb R^d$ is a compact convex set
such that the spectral function system $\mathcal E_K^{[d]}$ determined by $K$ has the lifting property, 
then $K^{\rm min}$ has Smith-Ward geometry.
\end{theorem}
 
 \begin{proof} Suppose $\mathfrak a\in\mathbb B_{\rm sa}^d$ is such that $W_{\rm e}(\mathfrak a)=K^{\rm min}$. By definition
 of $K^{\rm min}$, there exists a
$d$-tuple $\mathfrak d\in\mathbb B_{\rm sa}^d$ of pairwise commuting selfadjoint operators
with joint spectrum $K$. By the Spectral Theorem, $\mathcal O_\mathfrak d\simeq \mathcal E_K^{[d]}$, where each selfadjoint operator
$d_j$ is mapped to $\varepsilon_j$ under the isomorphism. Because $W_{\rm e}(\mathfrak a)=K^{\rm min}$ implies the isomorphism
$\mathcal O_{\dot{\mathfrak a}}\simeq\mathcal O_\mathfrak d\simeq \mathcal E_K^{[d]}$, the operator system
$\mathcal O_{\dot{\mathfrak a}}$ has the lifting property. Hence, $K^{\rm min}$ has Smith-Ward geometry.
\end{proof}

Theorem \ref{min-lp-swg} leads to the following interesting problem: \emph{for which compact subsets $X\subseteq\mathbb R^d$
does the spectral function system $\mathcal E_K$, where $K=\mbox{\rm Conv}\,X$, have the lifting property?}

\section{C$^*$-Nuclearity, the Connes Embedding Problem, and the Smith-Ward Problem for $d=2$}

By expressing operators in terms of their real and imaginary parts, the Smith-Ward problem for $d=2$ 
is to determine whether, for each $x\in\mathbb B$, there exists a compact operator $j\in\mathbb K$ such that
$W(x+j)=W_{\rm e}(x)$.  As we have already noted, the Smith-Ward problem has an affirmative answer if it is known that every $3$-dimensional
operator subsystem of the Calkin algebra has the lifting property. Furthermore, as explained in \cite[Theorem 11.5]{kavruk2014}, this problem is
equivalent to the problem of whether every $3$-dimensional operator system has the lifting property. Thus, we are lead to ask: if $\osr$ is a $3$-dimensional
operator system, then is it true that $\osr\omin\mathbb B=\osr\omax\mathbb B$?

One class of operator systems $\osr$ for which $\osr\omin\mathbb B=\osr\omax\mathbb B$ will hold are the \emph{C$^*$-nuclear} operator systems, which are
precisely those operator systems $\osr$ for which $\osr\omin\A=\osr\omax\A$, for every unital C$^*$-algebra $\A$. Therefore, if it were known that every $3$-dimensional 
operator system were C$^*$-nuclear, then we would know that the Smith-Ward problem for $d=2$ is solved affirmatively for all Hilbert space operators.

Kavruk \cite{kavruk2014} observed an intriguing connection between these questions and the Connes Embedding Problem \cite{goldbring2022}: if one could prove that
there exists a non-C$^*$-nuclear $3$-dimensional operator system, and if one could prove 
that the Smith-Ward problem for $d=2$ is solved affirmatively for all operators,
then it would follow that the Connes Embedding Problem necessarily is solved in the negative, thereby giving a purely operator-theoretic proof to the negative solution \cite{nonCEP}
of the Connes Embedding Problem.  Therefore, there is value to studying C$^*$-nuclearity and the Smith-Ward problem in the setting of $3$-dimensional operator systems.

To this end, as indications of what can occur, and of where to not look for possible negative solutions to the Smith-Ward problem,
we present two examples of $3$-dimensional operator systems 
in Theorems \ref{sw-example} and \ref{quadratic operators} below
that are both C$^*$-nuclear and have the Smith-Ward property.

\begin{theorem}\label{sw-example} If $x\in\mathbb B$ is such that $\|x\|\leq 1$ and $\sigma(x)\supseteq S^1$,
then
\begin{enumerate}
\item $\mathcal O_x$ is C$^*$-nuclear,
\item $\cstare(\mathcal O_x)\cong C(S^1)$, and
\item $\mathcal O_x$ has the Smith-Ward property.
\end{enumerate}
\end{theorem}

\begin{proof} Let $u$ denote the unitary generator of $\cstar(\mathbb Z)$. Thus, 
$\sigma(u)=S^1$ and $\mathcal O_u\simeq \mathcal S_1$, where $\mathcal S_1$ is the operator subsystem of $C(S^1)$ spanned by the complex-valued functions
$z\mapsto 1$, $z\mapsto z$, and $z\mapsto\overline z$ on $S^1$.
Because $\mathcal S_1$ is C$^*$-nuclear \cite{kavruk2014,kavruk2015}, so is $\mathcal O_u$.

The matrix range of $u$ is given by $W(u)=\left(\mbox{\rm Conv}(S^1)\right)^{\rm min}$. Because 
$x\in\mathbb B$ is a contraction, we have $x=\phi(u)$ for some ucp map $\phi:\cstar(\mathbb Z)\rightarrow\mathbb B$,
and so $W(x)\subseteq W(u)$. However, as $\sigma(x)\supseteq S^1$, we have
\[
\left(\mbox{\rm Conv}(S^1)\right)^{\rm min} \subseteq W(x)\subseteq W(u) = \left(\mbox{\rm Conv}(S^1)\right)^{\rm min}.
\]
Thus, $\mathcal O_x\simeq \mathcal O_u$ which implies $\mathcal O_x$ is C$^*$-nuclear and $\cstare(\mathcal O_x)=\cstare(\mathcal O_u)$.
As $\cstare(\mathcal S_1)\cong C(S^1)$, we deduce $\cstare(\mathcal O_x)\cong C(S^1)$.

Turning to the proof of the Smith-Ward property, we assume 
$\cstar(\mathcal O_x)\cap\mathbb K\not=\{0\}$, for otherwise $\cstar(\mathcal O_x)$ admits no essential matrix states and
$W(x)=W_{\rm e}(x)$. Hence, under this assumption, $\cstar(\mathcal O_x)$ admits 
essential matrix states. 

By the Kadison-Schwarz inequality, any state $\phi$ on $\cstar(\mathcal O_x)$ that sends $x$ to a point $\lambda\in S^1$
is a character on $\cstar(\mathcal O_x)$; in particular, every essential state on $\cstar(\mathcal O_x)$ that maps $x$ to the unit circle
is an essential character on $\cstar(\mathcal O_x)$, resulting in a point of the essential spectrum $\sigma_{\rm e}(x)=\sigma(\dot{x})$ of $x$.  
Hence, $S^1\subseteq\sigma(\dot{x})$.

Conversely, let $\sigma_{\rm p}(x)$ denote the point spectrum of $x$. 
If $\lambda_1,\lambda_2\in S^1$ are distinct eigenvalues of $x$, with corresponding eigenvectors $\eta_1$ and $\eta_2$, 
then $x^*\eta_j=\overline{\lambda}_j\eta_j$ and
\[
\lambda_1\overline{\lambda}_2\langle\eta_1,\eta_2\rangle=\langle x\eta_1,x\eta_2\rangle=\langle x^*x\eta_1,\eta_2\rangle=\langle\eta_1,\eta_2\rangle.
\]
Because $\lambda_1\overline{\lambda}_2\not=1$, it must be that $\langle\eta_1,\eta_2\rangle=0$. 
Hence, by the separability of $\ell^2(\mathbb N)$, $x$ has at most
countably many eigenvalues on $S^1$, and so $S^1\setminus(S^1\cap\sigma_{\rm p}(x))$ is dense in $S^1$.
The approximate point spectrum of $x$ consists of all eigenvalues of finite multiplicity together with all elements of the left essential spectrum of $x$.
Hence, 
\[
S^1\setminus(S^1\cap\sigma_{\rm p}(x))\subseteq \sigma_{\rm ap}(x)\setminus \sigma_{\rm p}(x)\subseteq\sigma_{\rm le}(x)\subseteq \sigma(\dot{x}).
\]
As $\sigma(\dot{x})$ is closed, 
the closure of $S^1\setminus(S^1\cap\sigma_{\rm p}(x))$, namely $S^1$, is contained in $\sigma(\dot{x})$.

Thus, $\dot{x}$ satisfies $\|\dot{x}\| \leq 1$ and $\sigma(\dot{x})\supseteq S^1$, which implies
$\mathcal O_{\dot{x}}$ is C$^*$-nuclear. Hence, $\mathcal O_{\dot{x}}$ has the lifting property, 
and so $x$ has the Smith-Ward property.
 \end{proof}

The next example concerns quadratic elements, which are nonscalar elements $x$ in a unital C$^*$-algebra $\A$
for which $\alpha x^2 +\beta x +\gamma 1=0$
for some $\alpha, \beta, \gamma\in\mathbb C$ with $\alpha\not=0$.

\begin{theorem}\label{quadratic operators} If $x$ is a quadratic element of a unital C$^*$-algebra $A$, then 
\begin{enumerate}
\item $\mathcal O_x$ is a C$^*$-nuclear OMAX operator system, 
\item $\cstare(\mathcal O_x)=\mathbb C\times \mathbb C$, if $x$ is normal, and
$\cstare(\mathcal O_x)=\M_2(\mathbb C)$, if $x$ is nonnormal.
\end{enumerate}
Furthermore, if $\A=\mathbb B$, then $\mathcal O_x$ has the Smith-Ward property.
\end{theorem}

\begin{proof}
By the polynomial spectral mapping theorem, the spectrum of a quadratic element $x$ has at most two points, which correspond to the roots
of the degree-2 annihilating polynomial $f\in\mathbb C[t]$ of $x$. 

If $x$ is normal, then there must be two distinct spectral points, for otherwise
$x$ would be linear rather than quadratic. Therefore, we obtain $\cstare(\mathcal O_x)=\mathbb C\times \mathbb C$, by Theorem \ref{commuting}.
As $\mathcal O_x$ is unitally completely order isomorphic to an operator subsystem of $\mathbb C^2$, it must be that $\mathcal O_x\simeq\mathbb C^2$,
which is an abelian C$^*$-algebra and, hence, it is both C$^*$-nuclear and OMAX.

Assume now that $x\in\A$
is non-normal. The numerical range $W_1(x)$ is a compact convex body $K$ such that the topological boundary
of $K$ is an ellipse. By Theorem \ref{ng ellipse} and its proof, $W(x)=K^{\rm max}$ has Smith-Ward geometry, and $\mathcal O_x$ 
is an operator system unitally completely order isomorphic to the operator system of complex $2\times 2$ Toeplitz matrices, which
is both C$^*$-nuclear and OMAX, and has $\M_2(\mathbb C)$ as its minimal C$^*$-cover \cite{farenick2021}.

Turning to the case $\A=\mathbb B$, the element
$\dot{x}$ is a scalar, a linear element, or a quadratic element of the Calkin algebra.
In each case, $\mathcal O_{\dot{x}}$ is a C$^*$-nuclear OMAX operator system. Hence, 
$\mathcal O_{\dot{x}}$ has the lifting property, which implies $\mathcal O_{{x}}$ has the Smith-Ward property.
\end{proof}

\section{Discussion}

In this paper, we have examined the Smith-Ward property as a property of operator systems or matrix convex sets. However, an alternative
and commonly studied approach is through the study of joint numerical ranges and matrix ranges, where a $d$-tuple $\mathfrak x$ of Hilbert space
operators has the Smith-Ward property if there is a $d$-tuple $\mathfrak j$ of compact operators for which $W_{n,\rm e}(\mathfrak x)=W_n(\mathfrak x+\mathfrak j)$,
for all $n\in\mathbb N$. In this direction, it is known from the work in \cite{li--paulsen--poon2019} that for each $n_0\in\mathbb N$ and $d$-tuple $\mathfrak x$ of operators, there
is a $d$-tuple $\mathfrak j$ of compact operators for which $W_{n,\rm e}(\mathfrak x)=W_n(\mathfrak x+\mathfrak j)$,
for all $n\leq n_0$. In this setting, the still-open Smith-Ward problem for $d=2$ is to prove, or disprove, the statement
that to every $x\in\mathbb B$ there corresponds a $j\in\mathbb K$ such that
$W_{n,\rm e}( x)=W_n(x+ j)$, for every $n\in\mathbb N$. Furthermore, some of the results obtained herein regarding simplexes, elliptical discs, and quadratic operators
admit alternative, and sometimes more elementary, proofs using the theory of numerical or matrix
ranges, as demonstrated, for example, in \cite{li--poon2020}.

\section*{Note Added July 20, 2026}

Two recent advances on the Smith-Ward problem for $d=2$ have been announced by M.~Scherer (arXiv:2607.04274) and S.~Harris (arXiv:2607.11001).

\section*{Acknowledgement}

We wish to thank the referee of this paper for a very careful review and several helpful comments. 

This research was supported, in part, by
the NSERC Discovery Grant Program (Farenick), the Simons Foundation Grant 851334 (Li), and the PIMS Postdoctoral Fellowship Program (Singla).
Chi-Kwong Li is an affiliate member of the Institute for Quantum Computing, University of Waterloo.


\begin{thebibliography}{10}

\bibitem{ando1973}
T.~Ando, \emph{Structure of operators with numerical radius one}, Acta Sci.
  Math. (Szeged) \textbf{34} (1973), 11--15. \MR{0318920 (47 \#7466)}

\bibitem{arveson1977}
William Arveson, \emph{Notes on extensions of {$C^{\sp*}$}-algebras}, Duke
  Math. J. \textbf{44} (1977), no.~2, 329--355. \MR{0438137 (55 \#11056)}

\bibitem{brown--douglas--fillmore1977}
L.~G. Brown, R.~G. Douglas, and P.~A. Fillmore, \emph{Extensions of
  {$C\sp*$}-algebras and {$K$}-homology}, Ann. of Math. (2) \textbf{105}
  (1977), no.~2, 265--324. \MR{458196}

\bibitem{choi--li2000}
Man-Duen Choi and Chi-Kwong Li, \emph{Numerical ranges and dilations}, Linear
  and Multilinear Algebra \textbf{47} (2000), no.~1, 35--48. \MR{1752164}

\bibitem{connes--vansuijlekom2021}
Alain Connes and Walter~D. van Suijlekom, \emph{Spectral truncations in
  noncommutative geometry and operator systems}, Comm. Math. Phys. \textbf{383}
  (2021), no.~3, 2021--2067. \MR{4244265}

\bibitem{courtney2021}
Kristin~E. Courtney, \emph{Universal {$\rm C^*$}-algebras with the local
  lifting property}, Math. Scand. \textbf{127} (2021), no.~2, 361--381.
  \MR{4346103}

\bibitem{davisdon--dor-on--shalit--solel2017}
Kenneth~R. Davidson, Adam Dor-On, Orr~Moshe Shalit, and Baruch Solel,
  \emph{Dilations, inclusions of matrix convex sets, and completely positive
  maps}, Int. Math. Res. Not. IMRN (2017), no.~13, 4069--4130. \MR{3671511}

\bibitem{davidson--kennedy2015}
Kenneth~R. Davidson and Matthew Kennedy, \emph{The {C}hoquet boundary of an
  operator system}, Duke Math. J. \textbf{164} (2015), no.~15, 2989--3004.
  \MR{3430455}

\bibitem{farenick2000}
Douglas Farenick, \emph{Extremal matrix states on operator systems}, J. London
  Math. Soc. (2) \textbf{61} (2000), no.~3, 885--892. \MR{1766112
  (2001e:46103)}

\bibitem{farenick2021}
\bysame, \emph{The operator system of {T}oeplitz matrices}, Trans. Amer. Math.
  Soc. Ser. B \textbf{8} (2021), 999--1023. \MR{4340832}

\bibitem{farenick2025}
\bysame, \emph{Matrix convexity and unitary power dilations of
  {T}oeplitz-contractive operator tuples}, Acta Sci. Math. (Szeged) \textbf{91}
  (2025), no.~3-4, 565--589. \MR{5003467}

\bibitem{farenick--kavruk--paulsen--todorov2014}
Douglas Farenick, Ali~S. Kavruk, Vern~I. Paulsen, and Ivan~G. Todorov,
  \emph{Operator systems from discrete groups}, Comm. Math. Phys. \textbf{329}
  (2014), no.~1, 207--238. \MR{3207002}

\bibitem{farenick--kavruk--paulsen--todorov2018}
\bysame, \emph{Characterisations of the weak expectation property}, New York J.
  Math. \textbf{24A} (2018), 107--135. \MR{3904873}

\bibitem{farenick--paulsen2012}
Douglas Farenick and Vern~I. Paulsen, \emph{Operator system quotients of matrix
  algebras and their tensor products}, Math. Scand. \textbf{111} (2012), no.~2,
  210--243. \MR{3023524}

\bibitem{farenick--tessier2022}
Douglas Farenick and Ryan Tessier, \emph{Purity of the embeddings of operator
  systems into their {${\rm C}^*$}- and injective envelopes}, Pacific J. Math.
  \textbf{317} (2022), no.~2, 317--338. \MR{4452503}

\bibitem{goldbring2022}
Isaac Goldbring, \emph{The {C}onnes embedding problem: a guided tour}, Bull.
  Amer. Math. Soc. (N.S.) \textbf{59} (2022), no.~4, 503--560. \MR{4478032}

\bibitem{hamana1979b}
Masamichi Hamana, \emph{Injective envelopes of operator systems}, Publ. Res.
  Inst. Math. Sci. \textbf{15} (1979), no.~3, 773--785. \MR{566081}

\bibitem{helton--klep--mcculllough--schweighofer2019}
J.~William Helton, Igor Klep, Scott McCullough, and Markus Schweighofer,
  \emph{Dilations, linear matrix inequalities, the matrix cube problem and beta
  distributions}, Mem. Amer. Math. Soc. \textbf{257} (2019), no.~1232, vi+106.
  \MR{3898991}

\bibitem{nonCEP}
Zhengfeng Ji, Anand Natarajan, Thomas Vidick, John Wright, and Henry Yuen,
  \emph{{MIP}{${}\sp*$}={RE}}, arXiv:2001.04383 (2022).

\bibitem{kavruk2014}
Ali~S. Kavruk, \emph{Nuclearity related properties in operator systems}, J.
  Operator Theory \textbf{71} (2014), no.~1, 95--156. \MR{3173055}

\bibitem{kavruk2015}
\bysame, \emph{On a non-commutative analogue of a classical result of {N}amioka
  and {P}helps}, J. Funct. Anal. \textbf{269} (2015), no.~10, 3282--3303.
  \MR{3401618}

\bibitem{kavruk--paulsen--todorov--tomforde2011}
Ali~S. Kavruk, Vern~I. Paulsen, Ivan~G. Todorov, and Mark Tomforde,
  \emph{Tensor products of operator systems}, J. Funct. Anal. \textbf{261}
  (2011), no.~2, 267--299. \MR{2793115}

\bibitem{kavruk--paulsen--todorov--tomforde2013}
\bysame, \emph{Quotients, exactness, and nuclearity in the operator system
  category}, Adv. Math. \textbf{235} (2013), 321--360. \MR{3010061}

\bibitem{li--paulsen--poon2019}
Chi-Kwong Li, Vern~I. Paulsen, and Yiu-Tung Poon, \emph{Preservation of the
  joint essential matricial range}, Bull. Lond. Math. Soc. \textbf{51} (2019),
  no.~5, 868--876. \MR{4022432}

\bibitem{li--poon2020}
Chi-Kwong Li and Yiu-Tung Poon, \emph{Numerical range, dilation, and maximal
  operator systems}, Acta Sci. Math. (Szeged) \textbf{86} (2020), no.~3-4,
  681--696. \MR{4174800}

\bibitem{passer--shalit--solel2018}
Benjamin Passer, Orr~Moshe Shalit, and Baruch Solel, \emph{Minimal and maximal
  matrix convex sets}, J. Funct. Anal. \textbf{274} (2018), no.~11, 3197--3253.
  \MR{3782992}

\bibitem{robertson--smith1989}
A.~G. Robertson and R.~R. Smith, \emph{Liftings and extensions of maps on
  {$C^*$}-algebras}, J. Operator Theory \textbf{21} (1989), no.~1, 117--131.
  \MR{1002124}

\bibitem{smith--ward1980b}
R.~R. Smith and J.~D. Ward, \emph{Locally isometric liftings from quotient
  {$C\sp{\ast} $}-algebras}, Duke Math. J. \textbf{47} (1980), no.~3, 621--631.
  \MR{587170}

\bibitem{smith--ward1980}
\bysame, \emph{Matrix ranges for {H}ilbert space operators}, Amer. J. Math.
  \textbf{102} (1980), no.~6, 1031--1081.

\bibitem{tso--wu1999}
Shu-Hsien Tso and Pei~Yuan Wu, \emph{Matricial ranges of quadratic operators},
  Rocky Mountain J. Math. \textbf{29} (1999), no.~3, 1139--1152. \MR{1733085}

\end{thebibliography}

\end{document}